\theoremstyle{plain}
  \newtheorem{theorem}{Theorem}
\theoremstyle{remark}
  \newtheorem{remark}{Remark}
\renewcommand{\phi}{\varphi}
\newcommand{\ca}{\mathcal{A}}
\newcommand{\cb}{\mathcal{B}}
\newcommand{\cd}{\mathcal{D}}
\newcommand{\cj}{\mathcal{J}}
\newcommand{\ck}{\mathcal{K}}
\newcommand{\cl}{\mathcal{L}}
\newcommand{\cv}{\mathcal{V}}
\newcommand{\LPT}{{L_2(\mathds{T})}}
\newcommand{\LPTT}{{L_2(\mathds{T}^2)}}
\author{Konstantin A. Rybakov}
\title{On Approximate Representation of Fractional Brownian Motion}
\begin{document}

\maketitle

\textbf{Abstract.} This paper considers the orthogonal expansion of the fractional Brownian motion relative to the Legendre polynomials. Such an expansion has not only theoretical but also practical interest, since it can be applied to approximate and simulate the fractional Brownian motion in continuous time. The relations for the mean square approximation error are presented, and a comparison with the previously obtained result is carried out.

\vskip 0.5ex

\textbf{Keywords:} fractional Brownian motion, approximation, simulation, orthogonal expansion, spectral representation, Legendre polynomials

\vskip 0.5ex

\textbf{MSC:} 60G22, 60H35

\makeatletter{\renewcommand*{\@makefnmark}{}
\footnotetext{Email: rkoffice@mail.ru}
\footnotetext{Citation: Rybakov, K.A. On Approximate Representation of Fractional Brownian Motion. {\em Methodol. Comput. Appl. Probab.} {\bf 2025}, {\em 27(4)}, 88. \url{https://doi.org/10.1007/s11009-025-10219-w}}\makeatother}

\renewcommand{\thefootnote}{\fnsymbol{footnote}}

\thispagestyle{empty}

\section{Introduction}\label{secIntro}

The analysis of probability models for various objects often involves not only obtaining theoretical results but also the statistical simulation. It can be used to verify theoretical results or, if such results are missing, it can be the only source of information about objects under consideration. For statistical simulation, the key is the ability to obtain realizations of scalar and vector random variables as well as random processes~\cite{MihVoi_06}.

In this paper, we study a one-parameter family of Gaussian random processes, namely the fractional Brownian motion with a parameter called the Hurst index. The aim of this paper is to present a method for its approximation and simulation in continuous time using the orthogonal expansion of this random process~\cite{Ryb_Comp25}.

The simplest method for simulation of a Gaussian random process given by a mathematical expectation and covariance function is reduced to simulation of a Gaussian random vector of increments corresponding to the nodes of a uniform time grid~\cite{MihVoi_06, Law_21}. This method is universal, i.e., suitable for any Gaussian random process. It guarantees the exact result at the grid nodes, i.e., the simulation result corresponds to the given distribution law, but it only provides an approximation of a random process in discrete time.

The fractional Brownian motion simulation can be performed using different approaches, many of which are also universal, i.e., suitable for any Gaussian random process. Their efficiency is evaluated using various criteria by which a comparison can be carried out: accuracy, computational complexity, computational speed, amount of memory used, etc. We recommend that the reader refer to frequently cited reviews of methods for the fractional Brownian motion simulation~\cite{Coe_JSS00, Die_MS02, DieMan_PEIS03, KijTam_13}. Despite quite some time passing since their publication, they have not lost their relevance. Methods described in these reviews are either applied without modification~\cite{VarBul_SPL15, CreMarMus_FF22, RegDolBen_NC23}, or they are used as the basis for more advanced methods to solve specific problems~\cite{WalWie_PRE20}.

A brief overview of various natural and applied problems whose mathematical models include the fractional Brownian motion is given in~\cite{AruWalWie_PRE20}. This overview is useful because it not only lists specific applications with references to published results but also specifies the range for the Hurst index. The fractional Brownian motion simulation is important not only in specific applications but also for verifying theoretical results that are related to different functionals depending on the fractional Brownian motion~\cite{SadWie_PRE21, KimAki_PRE22}.

The paper~\cite{Ryb_Comp25} proposes the orthogonal expansion of the fractional Brownian motion relative to the Legendre polynomials for approximation and simulation of this random process. The main result from~\cite{Ryb_Comp25} is based on two components. The first one is the integral representation of the fractional Brownian motion proposed in~\cite{DecUst_PA99}. The second one is the spectral form of mathematical description of control systems~\cite{SolSemPeshNed_79, RybSot_TAC07}. The paper~\cite{Ryb_Comp25} gives the expansion of the kernel, related to the integral representation of the fractional Brownian motion, into orthogonal series with respect to the Legendre polynomials. The matrix of expansion coefficients is formed as the product of four matrices: two matrices corresponding to multiplication operators with power functions as multipliers and two matrices corresponding to fractional integration operators.

In this paper, simpler relations for expansion coefficients are obtained. Their use reduces the computational complexity and increases the accuracy for the approximate representation of the fractional Brownian motion. The method proposed here does not replace but complements the previously obtained results. The method for finding expansion coefficients from~\cite{Ryb_Comp25} ensures computational stability when using machine arithmetic operations. The approach considered in this paper requires the use of libraries that provide floating point calculations with any given accuracy or the use of symbolic calculations, but it allows one to calculate exactly the mean square approximation error when applying both methods. We emphasize that both methods provide the fractional Brownian motion approximation in continuous time. They can be used to represent and approximate random processes driven by the fractional Brownian motion (a generalization of the method proposed in~\cite{Ryb_DUPU20, RybYus_IOP20}), e.g., the fractional Ornstein--Uhlenbeck process and the fractional Brownian bridge~\cite{Thao_EWJM13}. The obtained results can be further applied to represent and simulate iterated stochastic integrals over the fractional Brownian motion or a combination of independent fractional Brownian motions (a generalization of the method described in~\cite{Ryb_Springer22}).

In comparison with existing studies, the main contribution of this work is as follows:

(1)\;The explicit relations for expansion coefficients of the kernel related to the integral representation of the fractional Brownian motion are derived (these expansion coefficients are defined with respect to the Legendre polynomials).

(2)\;Equations that allow one to calculate exactly the mean square error of the fractional Brownian motion approximation by the polynomial with random coefficients are obtained.

The rest of this paper has the following structure. Section~\ref{secFBM} provides the definition of the fractional Brownian motion and formulates the problem statement. Section~\ref{secPreliminary} contains the relations for integral and spectral representations of the fractional Brownian motion to the extent necessary to obtain the new results and carry out a comparative analysis. The main result of the paper is presented in Section~\ref{secMain}. Section~\ref{secSpFBM} is devoted to the fractional Brownian motion approximation, in which the expressions for the mean square approximation error are obtained. Brief conclusions are given in Section~\ref{secSpConcl}.

\section{The Problem Statement}\label{secFBM}

The paper considers the fractional Brownian motion $B_H(\cdot)$~\cite{Shi_99, BiaHuOksZha_08, Mis_08}, i.e., a centered Gaussian random process with the covariance function
\[
  R_H(t,\tau) = \frac{t^{2H} + \tau^{2H} - |t-\tau|^{2H}}{2}, \ \ \ t,\tau \geqslant 0,
\]
where $H \in (0,1)$ is a parameter called the Hurst index.

The random process $B_H(\cdot)$ can be represented as a stochastic integral over the Brownian motion $B(\cdot) = B_{1/2}(\cdot)$, i.e., the standard Wiener process. Such a representation was first introduced in~\cite{ManNess_SR68}, where $B_H(t)$ is defined as a stochastic integral over the set $(-\infty,t]$. Another representation was proposed in~\cite{DecUst_PA99}, and it underlies the problem being solved:
\begin{equation}\label{eqDefFBMFinite}
  B_H(t) = \int_0^t k_H(t,\tau) dB(\tau).
\end{equation}

On the one hand, the integral representation defined by Equation~\eqref{eqDefFBMFinite} is convenient, since $B_H(t)$ is defined as a stochastic integral over the finite set $[0,t]$. On the other hand, the kernel $k_H(\cdot)$ corresponding to Equation~\eqref{eqDefFBMFinite} is not expressed in a simple way through elementary functions~\cite{BiaHuOksZha_08}:
\[
  k_H(t,\tau) = a_H (t-\tau)^{H-1/2} \, {}_2F_1 \biggl( \frac{1}{2} - H, H - \frac{1}{2}, H + \frac{1}{2}, 1 - \frac{t}{\tau} \biggr) 1(t-\tau),
\]
where the constant $a_H$ is defined as
\[
  a_H = \sqrt{\frac{2H \Gamma(H+1/2) \Gamma(3/2-H)}{\Gamma(2-2H)}}, \ \ \ \text{or for $H \neq 1/2$} \ \ \ a_H = \sqrt{\frac{\pi H (1-2H)}{\Gamma(2-2H) \cos \pi H}},
\]
and the following functions are used: ${}_2F_1(\cdot)$ is the hypergeometric function, $\Gamma(\cdot)$ is the Gamma function, and $1(\cdot)$~is the Heaviside function, i.e., $1(\eta) = 1$ if $\eta > 0$ and $1(\eta) = 0$ if $\eta \leqslant 0$. For $H = 1/2$, we have $a_H = 1$ and $k_H(t,\tau) = 1(t-\tau)$.

The problem to be solved consists of finding expansion coefficients of the function $k_H(\cdot)$ relative to the orthonormal basis of $\LPTT$, the space of square integrable functions defined on $\mathds{T}^2$, where $\mathds{T} = [0,T]$. It is assumed that the basis is formed by all possible products of functions that form the orthonormal basis of $\LPT$, the space of square integrable functions defined on $\mathds{T}$~\cite{Bal_80}. The Legendre polynomials~\cite{SolSemPeshNed_79} are chosen as such a basis:
\begin{equation}\label{eqDefLeg}
  \hat P(i,t) = \sqrt{\frac{2i+1}{T}} \sum\limits_{k=0}^i {l_{ik} \, \frac{t^k}{T^k}}, \ \ \ i = 0,1,2,\dots,
\end{equation}
where
\begin{equation}\label{eqDefLegCoef}
  l_{ik} = (-1)^{i-k} C^i_{i+k} C^{i-k}_i = (-1)^{i-k} \prod\limits_{m=1}^k \frac{(i-m+1)(i+m)}{m^2}.
\end{equation}

The proof that the function $k_H(\cdot)$ belongs to $\LPTT$ follows, e.g., from the representation of the covariance function $R_H(\cdot)$:
\[
  R_H(t,\tau) = \int_\mathds{T} {k_H(t,\theta) k_H(\tau,\theta) d\theta},
\]
which defines a trace class operator in $\LPT$~\cite{GihSco_77}. The It\^o stochastic integral properties~\cite{Oks_03}, in particular the It\^o isometry, allow one to confirm that
\[
  \mathrm{E} \int_\mathds{T} B_H^2(t) dt = \int_\mathds{T} R_H(t,t) dt = \| k_H(\cdot) \|_\LPTT^2,
\]
where $\mathrm{E}$ denotes the mathematical expectation, consequently,
\begin{equation}\label{eqKHSquaredNorm}
  \| k_H(\cdot) \|_\LPTT^2 = \int_\mathds{T} t^{2H} dt = \frac{T^{2H+1}}{2H+1}.
\end{equation}

Thus, it is required to find values
\begin{equation}\label{eqSpKH}
  K_{ij}^H = \int_{\mathds{T}^2} \hat P(i,t) \hat P(j,\tau) k_H(t,\tau) dt d\tau = \int_\mathds{T} \hat P(i,t) \biggl[ \int_0^t k_H(t,\tau) \hat P(j,\tau) d\tau\biggr] dt, \ \ \ i,j = 0,1,2,\dots
\end{equation}

The solution to this problem provides a representation of the fractional Brownian motion as the orthogonal expansion, on the basis of which it is possible to obtain an algorithm for the approximate simulation of its paths. Such an algorithm has two important advantages. First, the fractional Brownian motion approximation in continuous time is constructed, and second, the mean square approximation error can be calculated exactly.

The relations similar to Equation~\eqref{eqSpKH} can be formally written for an arbitrary orthonormal basis of $\LPT$, but it is quite easy to derive analytical expressions for expansion coefficients of the function $k_H(\cdot)$ when choosing the Legendre polynomials. In addition, the Legendre polynomials~\eqref{eqDefLeg} were previously used to represent the Wiener process, which is a special case of the fractional Brownian motion, and such a representation turned out to be quite effective~\cite{Kuz_JVMMF19, Kuz_DUPU20}.

\section{Preliminary Results}\label{secPreliminary}

The function $k_H(\cdot)$ defines the Hilbert--Schmidt operator~\cite{Bal_80} in $\LPT$, namely
\begin{equation}\label{eqLinearIO}
  \ck_H \phi(t) = \int_0^{t} k_H(t,\tau) \phi(\tau) d\tau, \ \ \ \phi(\cdot) \in \LPT,
\end{equation}
and according to~\cite{DecUst_PA99}, this operator is represented as follows:
\begin{equation}\label{eqDefFBMFiniteOper}
  \ck_H = \left\{
    \begin{array}{ll}
      a_H \, \cj_{0+}^{2H} \circ \ca_{f_{1/2-H}} \circ \cj_{0+}^{1/2-H} \circ \ca_{f_{H-1/2}} & \text{for} ~ H < 1/2 \\
      a_H \, \cj_{0+}^1 \circ \ca_{f_{H-1/2}} \circ \cj_{0+}^{H-1/2} \circ \ca_{f_{1/2-H}} & \text{for} ~ H > 1/2,
    \end{array}
  \right.
\end{equation}
where $\ca_{f_\alpha}$ and $\cj_{0+}^\beta$ are the multiplication operator with multiplier $f_\alpha(t) = t^\alpha$, $\alpha > -1/2$, and the left-sided Riemann--Liouville integration operator of fractional order $\beta > 0$, respectively, i.e.,
\[
  \ca_{f_\alpha} \phi(t) = t^\alpha \phi(t), \ \ \ \cj_{0+}^\beta \phi(t) = \frac{1}{\Gamma(\beta)} \int_0^t {\frac{\phi(\tau) d\tau}{(t-\tau)^{1-\beta}}}, \ \ \ t > 0.
\]

For $H = 1/2$, the operator $\ck_H$ coincides with the first-order integration operator $\cj_{0+}^1$.

The representation of the fractional Brownian motion as the expansion into orthogonal series with respect to the orthonormal basis of $\LPT$ is given in~\cite{Ryb_Comp25}. Such a representation relative to the Legendre polynomials~\eqref{eqDefLeg} is presented below:
\begin{equation}\label{eqSpFBMFinite}
  B_H(t) = \sum\limits_{i=0}^\infty \cb_i^H \hat P(i,t), \ \ \ t \in \mathds{T},
\end{equation}
and here
\begin{equation}\label{eqSpFBMCoef}
  \cb_i^H = \sum\limits_{j=0}^\infty K_{ij}^H \cv_j, \ \ \ \cv_i = \int_\mathds{T} {\hat P(i,t) dB(t)}, \ \ \ i = 0,1,2,\dots,
\end{equation}
where values $K_{ij}^H$ are determined by Equation~\eqref{eqSpKH}, and random variables $\cv_i$ are independent and have a standard normal distribution.

Using matrix notation, we obtain $\cb^H = K^H \cv$, where $\cb^H$ and $\cv$ are infinite random column matrices with elements $\cb_i^H$ and $\cv_i$, respectively, and $K^H$ is the infinite matrix with elements $K_{ij}^H$; $K^H$ is the matrix representation of the operator $\ck_H$.

Another result from~\cite{Ryb_Comp25} is the representation of the matrix $K^H$ in the form
\begin{equation}\label{eqSpFBMFiniteOper}
  K^H = \left\{
    \begin{array}{ll}
      a_H \, P^{-2H} A^{1/2-H} P^{-(1/2-H)} A^{H-1/2} & \text{for} ~ H < 1/2 \\
      a_H \, P^{-1} A^{H-1/2} P^{-(H-1/2)} A^{1/2-H} & \text{for} ~ H > 1/2,
    \end{array}
  \right.
\end{equation}
where $A^\alpha$ and $P^{-\beta}$ are infinite matrices with elements $A_{ij}^\alpha$ and $P_{ij}^{-\beta}$, expressions for which are given below; $A^\alpha$ and $P^{-\beta}$ are matrix representations of operators $\ca_{f_\alpha}$ and $\cj_{0+}^\beta$, respectively. For $H = 1/2$, the matrix $K^H$ coincides with the matrix $P^{-1}$ corresponding to the operator $\cj_{0+}^1$.

The paper~\cite{Ryb_Comp25} uses the terminology associated with the spectral form of mathematical description of control systems~\cite{SolSemPeshNed_79}. For instance, infinite column matrices are called spectral characteristics of functions or random processes, including generalized ones; infinite matrices are called two-dimensional spectral characteristics of functions or spectral characteristics of linear operators. Thus, spectral representations of functions, random processes, and linear operators (signals and systems) are considered~\cite{Ryb_Comp25, Ryb_Math23}. The representation of the matrix $K^H$ as a product of four matrices corresponds to a sequential signal transformation (in this case, the input signal is white noise) by linear blocks: two proportional blocks and two fractional-order integral blocks. This approach corresponds to the concept of the spectral form of mathematical description: the use of matrix representations of linear operators corresponding to elementary blocks of control systems to transform expansion coefficients of the input signal~\cite{SolSemPeshNed_79}. In this paper, the specified terminology will not be used for a more concise presentation of the results.

The constructiveness of Equation~\eqref{eqSpFBMFiniteOper} is ensured by analytical expressions for elements of matrices $A^\alpha$ and $P^{-\beta}$ derived in~\cite{Ryb_Comp25}.

Below are expressions for elements of matrices $A^\alpha$ and $P^{-\beta}$ that use the notation $F^\alpha$ for the infinite column matrix formed by expansion coefficients of the power function $f_\alpha(t) = t^\alpha$, $\alpha > -1/2$, relative to the Legendre polynomials~\eqref{eqDefLeg}:
\begin{equation}\label{eqSpFAlphaExplicit}
  \begin{gathered}
    F_i^\alpha = \int_\mathds{T} {t^\alpha \hat P(i,t) dt} = \sqrt{\frac{2i+1}{T}} \sum\limits_{k=0}^i {l_{ik} \, \frac{T^{\alpha+1}}{\alpha+k+1}} = T^\alpha \sqrt{T} \, \frac{\sqrt{2i+1} \, \alpha^{\underline{i}}}{(\alpha + 1)^{\overline{i+1}}}, \ \ \ i = 0,1,2,\dots,
  \end{gathered}
\end{equation}
where $\alpha^{\underline{i}}$ and $(\alpha+1)^{\overline{i+1}}$ are the lower (falling) and upper (rising) factorials, respectively:
\[
  \alpha^{\underline{i}} = \alpha (\alpha-1) \ldots (\alpha-i+1) \ \ \ \text{and} \ \ \ \alpha^{\overline{i}} = \alpha (\alpha+1) \ldots (\alpha+i-1).
\]

In addition to Equation~\eqref{eqSpFAlphaExplicit}, two recurrent relations are obtained~\cite{Ryb_Comp25}, namely by index $i$ and by degree $\alpha$:
\begin{gather}
  F_{i+1}^\alpha = \sqrt{\frac{2i+3}{2i+1}} \, \frac{\alpha-i}{\alpha+i+2} \, F_i^\alpha, \label{eqSpFAlphaImplicit} \\
  F_i^{\alpha+k} = \frac{T^k [(\alpha+1)^{\overline{k}}]^2}{(\alpha-i+1)^{\overline{k}}(\alpha+i+2)^{\overline{k}}} \, F_i^\alpha, \label{eqSpFAlphaImplicitAlpha}
\end{gather}
where $(\alpha+1)^{\overline{k}}$, $(\alpha-i+1)^{\overline{k}}$, and $(\alpha+i+2)^{\overline{k}}$ are upper factorials, and $k$ is the natural number.

For $j$th columns of matrices $A^\alpha$ and $P^{-\beta}$ the following equations hold:
\begin{equation}\label{eqSpAIOperExplicit}
  A_{*j}^\alpha = \sqrt{\frac{2j+1}{T}} \sum\limits_{k=0}^j \frac{l_{jk}}{T^k} \, F^{\alpha+k}, \ \ \ P_{*j}^{-\beta} = \frac{1}{\Gamma(\beta+1)} \sqrt{\frac{2j+1}{T}} \sum\limits_{k=0}^j \frac{l_{jk} k!}{T^k (\beta+1)^{\overline{k}}} \, F^{\beta+k}, \ \ \ j = 0,1,2,\dots,
\end{equation}
where $(\beta+1)^{\overline{k}}$ is the upper factorial.

The derivation of these equations is not difficult, but their use entails computational instability due to peculiarities of machine arithmetic for floating point numbers. In~\cite{Ryb_Comp25}, computationally stable algorithms based on symmetry properties are proposed.

The matrix representation of the multiplication operator relative to an arbitrary orthonormal basis corresponds to the infinite symmetric matrix~\cite{SolSemPeshNed_79}. Using this property as well as Equations~\eqref{eqDefLegCoef}, \eqref{eqSpFAlphaImplicitAlpha}, and~\eqref{eqSpAIOperExplicit}, we~obtain
\begin{equation}\label{eqSpAOperAlphaExplicitSafe1}
  A_{ij}^\alpha = \left\{
    \begin{array}{ll}
      \displaystyle \sqrt{\frac{2j+1}{T}} \, F_i^\alpha \sum\limits_{k=0}^j (-1)^{j-k} \prod\limits_{m=1}^k \biggl( \frac{\alpha+m}{m} \biggr)^2 \frac{j-m+1}{\alpha-i+m} \, \frac{j+m}{\alpha+i+m+1} & \text{for} ~ i \geqslant j \\
      A_{ji}^\alpha & \text{for} ~ i < j,
    \end{array}
  \right.
\end{equation}
or
\begin{equation}\label{eqSpAOperAlphaExplicitSafe2}
  A_{ij}^\alpha = \left\{
    \begin{array}{ll}
      \displaystyle \sqrt{\frac{2j+1}{T}} \, F_i^\alpha \sum\limits_{k=0}^j (-1)^{j-k} \Pi_k^{(ij)} & \text{for} ~ i \geqslant j \\
      A_{ji}^\alpha & \text{for} ~ i < j,
    \end{array}
  \right.
\end{equation}
where
\[
  \Pi_0^{(ij)} = 1, \ \ \ \Pi_k^{(ij)} = \biggl( \frac{\alpha+k}{k} \biggr)^2 \frac{j-k+1}{\alpha-i+k} \, \frac{j+k}{\alpha+i+k+1} \, \Pi_{k-1}^{(ij)}, \ \ \ k = 1,2,\dots
\]

To represent the matrix $P^{-\beta}$ as the sum of symmetric and skew-symmetric matrices, it is sufficient to choose elements $P_{ij}^{-\beta}$ with the even and odd sum of indices, respectively~\cite{Ryb_Comp25}. This property holds for the matrix representation of the fractional integration operator relative to the Legendre polynomials. Taking into account this property as well as Equations~\eqref{eqDefLegCoef},~\eqref{eqSpFAlphaImplicitAlpha}, and~\eqref{eqSpAIOperExplicit}, we have
\begin{equation}\label{eqSpOperIFracLExplicitSafe1}
  P_{ij}^{-\beta} = \left\{
    \begin{array}{ll}
      \displaystyle \frac{1}{\Gamma(\beta+1)} \sqrt{\frac{2j+1}{T}} \, F_i^\beta \sum\limits_{k=0}^j (-1)^{j-k} \prod\limits_{m=1}^k \frac{\beta+m}{m} \, \frac{j-m+1}{\beta-i+m} \, \frac{j+m}{\beta+i+m+1} & \text{for} ~ i \geqslant j \\
      (-1)^{i+j} P_{ji}^{-\beta} & \text{for} ~ i < j,
    \end{array}
  \right.
\end{equation}
or
\begin{equation}\label{eqSpOperIFracLExplicitSafe2}
  P_{ij}^{-\beta} = \left\{
    \begin{array}{ll}
      \displaystyle \frac{1}{\Gamma(\beta+1)} \sqrt{\frac{2j+1}{T}} \, F_i^\beta \sum\limits_{k=0}^j (-1)^{j-k} \Pi_k^{(ij)} & \text{for} ~ i \geqslant j \\
      (-1)^{i+j} P_{ji}^{-\beta} & \text{for} ~ i < j,
    \end{array}
  \right.
\end{equation}
where
\[
  \Pi_0^{(ij)} = 1, \ \ \ \Pi_k^{(ij)} = \frac{\beta+k}{k} \, \frac{j-k+1}{\beta-i+k} \, \frac{j+k}{\beta+i+k+1} \, \Pi_{k-1}^{(ij)}, \ \ \ k = 1,2,\dots, \ \ \ \beta \neq 1,2,3,\dots
\]

Under the condition $\beta = 1$, it is sufficient to use the following result~\cite{SolSemPeshNed_79}:
\begin{equation}\label{eqSpOperIExplicit}
  P_{ij}^{-1} = \left\{
    \begin{array}{ll}
      \displaystyle \frac{T}{2} & \text{for} ~ i = j = 0 \\
      \displaystyle \frac{T}{2\sqrt{4i^2-1}} & \text{for} ~ i = j + 1 \\
      \displaystyle -\frac{T}{2\sqrt{4j^2-1}} & \text{for} ~ j = i + 1 \\
      \displaystyle 0 \vphantom{\frac12} & \text{otherwise},
    \end{array}
  \right.
\end{equation}
and $P^{-\beta} = (P^{-1})^\beta$ for $\beta = 2,3,\dots$

Based on Equations~\eqref{eqSpFAlphaExplicit} and~\eqref{eqSpFAlphaImplicitAlpha}, we can obtain the main result presented in the next section. Equations~\eqref{eqSpAOperAlphaExplicitSafe1} and~\eqref{eqSpOperIFracLExplicitSafe1}, or~\eqref{eqSpAOperAlphaExplicitSafe2} and~\eqref{eqSpOperIFracLExplicitSafe2}, are required for numerical calculations and comparative analysis of the results of this paper and the paper~\cite{Ryb_Comp25}.

\section{The Main Result}\label{secMain}

The expression similar to Equation~\eqref{eqSpAIOperExplicit} can be easily derived for columns of the matrix $K^H$. Based on it, we can obtain the equation for elements of this matrix in expanded form. For this, it is sufficient to use Equation~\eqref{eqDefFBMFiniteOper} as well as the well-known relation~\cite{SamKilMar_87}:
\begin{equation}\label{eqFracInt}
  \cj_{0+}^\beta t^\alpha = \frac{1}{\Gamma(\beta)} \int_0^t {\frac{\tau^\alpha d\tau}{(t-\tau)^{1-\beta}}} = \frac{\Gamma(\alpha+1)}{\Gamma(\alpha+\beta+1)} \, t^{\alpha+\beta}, \ \ \ \alpha > -1.
\end{equation}

\begin{theorem}\label{thmMain}
Let $K^H$ be the infinite matrix with elements $K_{ij}^H$ defined by Equation~\eqref{eqSpKH}, $H \in (0,1)$. Then, for the $j$th column of the matrix $K^H$, we have
\begin{equation}\label{eqSpOperKHLExplicit}
  K_{*j}^H = a_H \Gamma(3/2-H) \sqrt{\frac{2j+1}{T}} \sum\limits_{k=0}^j \frac{l_{jk} (3/2-H)^{\overline{k}}}{T^k (H+1/2+k) k!} \, F^{H+1/2+k}, \ \ \ j = 0,1,2,\dots,
\end{equation}
where $(3/2-H)^{\overline{k}}$ is the upper factorial, and elements of the infinite column matrix $F^{H+1/2+k}$ are given by Equations~\eqref{eqSpFAlphaExplicit}--\eqref{eqSpFAlphaImplicitAlpha}.

For $H \neq 1/2$, values $K_{ij}^H$ satisfy the following equation:
\begin{equation}\label{eqSpOperKHijExplicit}
  K_{ij}^H = a_H \Gamma(1/2-H) \sqrt{\frac{2j+1}{T}} \, F_i^{H+1/2} \sum\limits_{k=0}^j (-1)^{j-k} \, \frac{1/2-H+k}{H+1/2+k} \, \Pi_k^{(ij)},
\end{equation}
where
\begin{gather*}
  \Pi_0^{(ij)} = 1, \ \ \ \Pi_k^{(ij)} = \frac{(H+1/2+k)^2 (k-1/2-H)}{k^3} \, \frac{j-k+1}{H+1/2-i+k} \, \frac{j+k}{H+1/2+i+k+1} \, \Pi_{k-1}^{(ij)}, \\
  k = 1,2,\dots
\end{gather*}
\end{theorem}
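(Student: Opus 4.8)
The plan is to work out the operator composition in Equation~\eqref{eqDefFBMFiniteOper} applied to the monomials $t^k$ arising from the Legendre polynomial expansion, using the fractional-integration identity~\eqref{eqFracInt} repeatedly. Concretely, I would start from the definition $K_{ij}^H = \int_\mathds{T} \hat P(i,t) \,[\ck_H \hat P(j,\cdot)](t)\, dt$ and push $\hat P(j,\cdot)$ through the operator chain. Writing $\hat P(j,\tau) = \sqrt{(2j+1)/T}\sum_{k=0}^j (l_{jk}/T^k)\,\tau^k$ by~\eqref{eqDefLeg}, the problem reduces by linearity to computing $\ck_H$ applied to each $\tau^k$. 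For $H<1/2$ this means evaluating $a_H\, \cj_{0+}^{2H}\circ\ca_{f_{1/2-H}}\circ\cj_{0+}^{1/2-H}\circ\ca_{f_{H-1/2}}$ on $\tau^k$: the inner multiplication gives $\tau^{k+H-1/2}$, then $\cj_{0+}^{1/2-H}$ turns it (via~\eqref{eqFracInt}, legitimate since $k+H-1/2>-1$) into a constant times $\tau^{k}$... wait, $k+H-1/2+1/2-H = k$, so one must be careful — actually $\cj_{0+}^{1/2-H}\tau^{k+H-1/2} = \frac{\Gamma(k+H+1/2)}{\Gamma(k+1)}\tau^{k}$; then $\ca_{f_{1/2-H}}$ gives $\tau^{k+1/2-H}$, and finally $\cj_{0+}^{2H}$ gives $\frac{\Gamma(k+3/2-H)}{\Gamma(k+3/2+H)}\tau^{k+H+1/2}$. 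Collecting the Gamma-function constants yields $\ck_H \tau^k = a_H\,c_k\, \tau^{k+H+1/2}$ with $c_k = \frac{\Gamma(k+H+1/2)\,\Gamma(k+3/2-H)}{k!\,\Gamma(k+3/2+H)}$. The same computation for $H>1/2$ with the other chain must give the identical constant $c_k$; I would verify this and note that the case $H=1/2$ is the trivial $\cj_{0+}^1\tau^k = \tau^{k+1}/(k+1)$, consistent with the limit.

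Next I would rewrite $c_k$ in the form appearing in~\eqref{eqSpOperKHLExplicit}. Since $\Gamma(k+3/2-H) = \Gamma(3/2-H)\,(3/2-H)^{\overline{k}}$ and $\Gamma(k+H+1/2)/\Gamma(k+3/2+H) = 1/\bigl((H+1/2)(H+1/2+1)\cdots(H+1/2+k)\bigr) = 1/\bigl((H+1/2)^{\overline{k+1}}\bigr)$... here I need to be slightly careful to match the stated denominator $(H+1/2+k)\,k!$ after the $F^{H+1/2+k}$ factor already absorbs part of the product — indeed the recursion $F^{\alpha+k}$ in~\eqref{eqSpFAlphaImplicitAlpha} carries a factor $[(\alpha+1)^{\overline k}]^2$, so one should decide how much of $c_k$ to keep explicit versus fold into $F^{H+1/2+k}$. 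The cleanest route is to take $\ck_H\hat P(j,\cdot) = a_H\sqrt{(2j+1)/T}\sum_k (l_{jk}/T^k)\,c_k\,\tau^{k+H+1/2}$, then project against $\hat P(i,\cdot)$, which by definition of $F_i^{\alpha}$ in~\eqref{eqSpFAlphaExplicit} gives $K_{ij}^H = a_H\sqrt{(2j+1)/T}\sum_k (l_{jk}/T^k)\,c_k\,F_i^{k+H+1/2}$; comparing termwise with~\eqref{eqSpOperKHLExplicit} then pins down $c_k = \Gamma(3/2-H)\,(3/2-H)^{\overline k}/\bigl((H+1/2+k)\,k!\bigr)$, and I would check this equals the Gamma-quotient computed above (the identity $\Gamma(k+H+1/2)/\Gamma(k+3/2+H) = 1/\bigl((H+1/2+k)(H+1/2)^{\overline{k}}\bigr)$ does the job, with the extra $(H+1/2)^{\overline k}$ cancelling against the corresponding piece hidden in how $F_i^{k+H+1/2}$ relates to $F_i^{H+1/2}$). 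This bookkeeping is the main place where sign/index errors could creep in, so I would do it slowly.

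For the second formula~\eqref{eqSpOperKHijExplicit}, the idea is to take the column expression~\eqref{eqSpOperKHLExplicit}, substitute $l_{jk} = (-1)^{j-k}\prod_{m=1}^k \frac{(j-m+1)(j+m)}{m^2}$ from~\eqref{eqDefLegCoef}, and express $F^{H+1/2+k}$ in terms of $F^{H+1/2}$ via~\eqref{eqSpFAlphaImplicitAlpha} with $\alpha = H+1/2$: $F_i^{H+1/2+k} = \frac{T^k[(H+3/2)^{\overline k}]^2}{(H-i+3/2)^{\overline k}(H+i+5/2)^{\overline k}}F_i^{H+1/2}$. The powers of $T$ cancel against $T^{-k}$, the factor $\Gamma(3/2-H)(3/2-H)^{\overline k}$ combines with $1/(H+1/2+k)$ and the $1/k!$, and after rearranging the rising factorials into a single telescoping product indexed by $m$ one reads off the claimed recursion for $\Pi_k^{(ij)}$. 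I would verify the stated ratio $\Pi_k^{(ij)}/\Pi_{k-1}^{(ij)} = \frac{(H+1/2+k)^2(k-1/2-H)}{k^3}\cdot\frac{j-k+1}{H+1/2-i+k}\cdot\frac{j+k}{H+1/2+i+k+1}$ by isolating the ratio of consecutive terms in $k$ in the closed-form sum — the factor $(k-1/2-H)$ comes precisely from $(3/2-H)^{\overline k}/(3/2-H)^{\overline{k-1}} = k+1/2-H$ together with a sign, and I would track where $\Gamma(1/2-H)$ versus $\Gamma(3/2-H)$ enters (via $\Gamma(3/2-H) = (1/2-H)\Gamma(1/2-H)$, which explains why the prefactor in~\eqref{eqSpOperKHijExplicit} is $a_H\Gamma(1/2-H)$ and why the extra factor $(1/2-H+k)/(H+1/2+k)$ appears inside the sum). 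The main obstacle is purely organizational: keeping the four interacting rising-factorial/factorial factors aligned so that the recursion comes out exactly as stated, and confirming the $H<1/2$ and $H>1/2$ operator chains yield the same $c_k$; there is no analytic difficulty beyond repeated application of~\eqref{eqFracInt}.
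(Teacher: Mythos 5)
Your proposal follows essentially the same route as the paper's proof: expand $\hat P(j,\cdot)$ into monomials, push each $t^k$ through the operator factorization~\eqref{eqDefFBMFiniteOper} using~\eqref{eqFracInt}, collect the Gamma factors into a constant $c_k$, project onto $\hat P(i,\cdot)$ to get $F_i^{H+1/2+k}$, and then obtain~\eqref{eqSpOperKHijExplicit} by substituting~\eqref{eqDefLegCoef} and~\eqref{eqSpFAlphaImplicitAlpha} with $\alpha=H+1/2$ and using $\Gamma(3/2-H)=(1/2-H)\Gamma(1/2-H)$; the treatment of the cases $H<1/2$, $H>1/2$, $H=1/2$ also matches. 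Two bookkeeping slips should be corrected, though neither affects the structure. First, the Gamma quotient you worry about is simply $\Gamma(k+H+1/2)/\Gamma(k+H+3/2)=1/(k+H+1/2)$ (it is $\Gamma(x)/\Gamma(x+1)$ with $x=k+H+1/2$); your claimed identity $1/\bigl((H+1/2+k)(H+1/2)^{\overline{k}}\bigr)$ (and the earlier $1/(H+1/2)^{\overline{k+1}}$) is false, and no cancellation against a piece "hidden" in $F_i^{H+1/2+k}$ is needed — your $c_k=\Gamma(k+H+1/2)\Gamma(k+3/2-H)/\bigl(k!\,\Gamma(k+3/2+H)\bigr)$ already equals $\Gamma(3/2-H)(3/2-H)^{\overline{k}}/\bigl((H+1/2+k)k!\bigr)$ directly, which is exactly how the paper gets~\eqref{eqSpOperKHLExplicit}. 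Second, in the expanded form the factor $(m-1/2-H)$ in $\Pi_k^{(ij)}$ does not arise from $(3/2-H)^{\overline{k}}/(3/2-H)^{\overline{k-1}}=k+1/2-H$ "together with a sign"; it comes from the index shift $(1/2-H)\prod_{m=1}^{k}(1/2-H+m)=(1/2-H+k)\prod_{m=1}^{k}(m-1/2-H)$ after absorbing the factor $(1/2-H)$ from $\Gamma(3/2-H)=(1/2-H)\Gamma(1/2-H)$, and it is this rearrangement that simultaneously produces the explicit factor $(1/2-H+k)/(H+1/2+k)$ in front of $\Pi_k^{(ij)}$ in~\eqref{eqSpOperKHijExplicit}. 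With these two repairs your plan reproduces the paper's argument.
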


\begin{proof}
Expansion coefficients $K_{ij}^H$ are determined by Equation~\eqref{eqSpKH}. According to Equation~\eqref{eqLinearIO}, it can be rewritten as
\begin{align*}
  K_{ij}^H = \int_\mathds{T} \hat P(i,t) \ck_H \hat P(j,t) dt = \sqrt{\frac{2j+1}{T}} \int_\mathds{T} {\hat P(i,t) \sum\limits_{k=0}^j {l_{jk} \, \frac{\ck_H t^k}{T^k}} \, dt}, \ \ \ i,j = 0,1,2,\dots,
\end{align*}
where the operator $\ck_H$ is defined by Equation~\eqref{eqDefFBMFiniteOper}. Further, we consider three cases.

1. The case $H < 1/2$:
\[
  K_{ij}^H = a_H \sqrt{\frac{2j+1}{T}} \int_\mathds{T} {\hat P(i,t) \sum\limits_{k=0}^j {l_{jk} \, \frac{\cj_{0+}^{2H} \circ \ca_{f_{1/2-H}} \circ \cj_{0+}^{1/2-H} \circ \ca_{f_{H-1/2}} t^k}{T^k}} \, dt}.
\]

Based on Equation~\eqref{eqFracInt}, we have
\begin{align*}
  & \cj_{0+}^{2H} \circ \ca_{f_{1/2-H}} \circ \cj_{0+}^{1/2-H} \circ \ca_{f_{H-1/2}} t^k = \cj_{0+}^{2H} \circ \ca_{f_{1/2-H}} \circ \cj_{0+}^{1/2-H} t^{H-1/2+k} \\
  & \ \ \ = \frac{\Gamma(H+1/2+k)}{\Gamma(k+1)} \, \cj_{0+}^{2H} \circ \ca_{f_{1/2-H}} t^k = \frac{\Gamma(H+1/2+k)}{k!} \, \cj_{0+}^{2H} t^{1/2-H+k} \\
  & \ \ \ = \frac{\Gamma(H+1/2+k) \Gamma(3/2-H+k)}{\Gamma(H+3/2+k) k!} \, t^{H+1/2+k} = \frac{\Gamma(3/2-H+k)}{(H+1/2+k) k!} \, t^{H+1/2+k},
\end{align*}
hence,
\begin{align*}
  K_{ij}^H & = a_H \sqrt{\frac{2j+1}{T}} \sum\limits_{k=0}^j \frac{l_{jk}}{T^k} \, \frac{\Gamma(3/2-H+k)}{(H+1/2+k) k!} \int_\mathds{T} {\hat P(i,t) t^{H+1/2+k} dt} \\
  & = a_H \Gamma(3/2-H) \sqrt{\frac{2j+1}{T}} \sum\limits_{k=0}^j \frac{l_{jk}}{T^k} \, \frac{(1/2-H+1) \ldots (1/2-H+k)}{(H+1/2+k) k!} \, F_i^{H+1/2+k} \\
  & = a_H \Gamma(3/2-H) \sqrt{\frac{2j+1}{T}} \sum\limits_{k=0}^j \frac{l_{jk} (3/2-H)^{\overline{k}}}{T^k (H+1/2+k) k!} \, F_i^{H+1/2+k}, \ \ \ i,j = 0,1,2,\dots,
\end{align*}
where $F_i^{H+1/2+k}$ are expansion coefficients of the power function $f_{H+1/2+k}(t) = t^{H+1/2+k}$ relative to the Legendre polynomials~\eqref{eqDefLeg}. This implies Equation~\eqref{eqSpOperKHLExplicit}.

2. The case $H > 1/2$:
\[
  K_{ij}^H = a_H \sqrt{\frac{2j+1}{T}} \int_\mathds{T} {\hat P(i,t) \sum\limits_{k=0}^j {l_{jk} \, \frac{\cj_{0+}^1 \circ \ca_{f_{H-1/2}} \circ \cj_{0+}^{H-1/2} \circ \ca_{f_{1/2-H}} t^k}{T^k}} \, dt}.
\]

Here, we also use Equation~\eqref{eqFracInt}. Since
\begin{align*}
  & \cj_{0+}^1 \circ \ca_{f_{H-1/2}} \circ \cj_{0+}^{H-1/2} \circ \ca_{f_{1/2-H}} t^k = \cj_{0+}^1 \circ \ca_{f_{H-1/2}} \circ \cj_{0+}^{H-1/2} t^{1/2-H+k} \\
  & \ \ \ = \frac{\Gamma(3/2-H+k)}{\Gamma(k+1)} \, \cj_{0+}^1 \circ \ca_{f_{H-1/2}} t^k = \frac{\Gamma(3/2-H+k)}{k!} \, \cj_{0+}^1 t^{H-1/2+k} = \frac{\Gamma(3/2-H+k)}{(H+1/2+k) k!} \, t^{H+1/2+k},
\end{align*}
we again obtain Equation~\eqref{eqSpOperKHLExplicit}.

3. Case $H = 1/2$:
\[
  K_{ij}^H = \sqrt{\frac{2j+1}{T}} \int_\mathds{T} {\hat P(i,t) \sum\limits_{k=0}^j {l_{jk} \, \frac{\cj_{0+}^1 t^k}{T^k}} \, dt}, \ \ \ \cj_{0+}^1 t^k = \frac{t^{k+1}}{k+1}.
\]

The last equality can be rewritten as
\[
  \cj_{0+}^1 t^k = \frac{\Gamma(3/2-H+k)}{(H+1/2+k) k!} \, t^{H+1/2+k},
\]
since $\Gamma(3/2-H+k) = \Gamma(k+1) = k!$ and $H+1/2+k = k+1$, i.e., Equation~\eqref{eqSpOperKHLExplicit} is valid.

Next, we obtain the equation for elements $K_{ij}^H$ in expanded form. Equation~\eqref{eqSpOperKHLExplicit} implies the following relation:
\[
  K_{ij}^H = a_H \Gamma(3/2-H) \sqrt{\frac{2j+1}{T}} \sum\limits_{k=0}^j \frac{l_{jk} (3/2-H)^{\overline{k}}}{T^k (H+1/2+k) k!} \, F_i^{H+1/2+k}, \ \ \ i,j = 0,1,2,\dots,
\]
in which, according to Equations~\eqref{eqDefLegCoef} and~\eqref{eqSpFAlphaImplicitAlpha}, we have
\begin{align*}
  & \frac{l_{jk} (3/2-H)^{\overline{k}}}{T^k (H+1/2+k) k!} \, F_i^{H+1/2+k} \\
  & \ \ \ = (-1)^{j-k} \frac{(j-k+1) \ldots j(j+1) \ldots (j+k) (1/2-H+1) \ldots (1/2-H+k)}{(k!)^3 (H+1/2+k)} \\
  & \ \ \ \ \ \ {} \times \frac{(H+1/2+1)^2 \dots (H+1/2+k)^2}{(H+1/2-i+1) \dots (H+1/2-i+k) (H+1/2+i+2) \dots (H+1/2+i+k+1)} \, F_i^{H+1/2} \\
  & \ \ \ = \frac{(-1)^{j-k} F_i^{H+1/2}}{H+1/2+k} \prod\limits_{m=1}^k \frac{(H+1/2+m)^2 (1/2-H+m)}{m^3} \, \frac{j-m+1}{H+1/2-i+m} \, \frac{j+m}{H+1/2+i+m+1}.
\end{align*}

Using the Gamma function properties, namely $\Gamma(3/2-H) = (1/2-H) \Gamma(1/2-H)$, and also taking into account the equality
\begin{gather*}
  (1/2-H) \prod\limits_{m=1}^k (1/2-H+m) = (1/2-H) \biggl( \, \prod\limits_{m=1}^{k-1} (1/2-H+m) \biggr) (1/2-H+k) \\
  = (1/2-H+k) \prod\limits_{m=0}^{k-1} (1/2-H+m) = (1/2-H+k) \prod\limits_{m=1}^k (m-1/2-H),
\end{gather*}
we obtain
\begin{align*}
  K_{ij}^H & = a_H \Gamma(1/2-H) \sqrt{\frac{2j+1}{T}} \, F_i^{H+1/2} \sum\limits_{k=0}^j (-1)^{j-k} \, \frac{1/2-H+k}{H+1/2+k} \\
  & \ \ \ {} \times \prod\limits_{m=1}^k \frac{(H+1/2+m)^2 (m-1/2-H)}{m^3} \, \frac{j-m+1}{H+1/2-i+m} \, \frac{j+m}{H+1/2+i+m+1},
\end{align*}
and this is equivalent to Equation~\eqref{eqSpOperKHijExplicit}. The theorem is proved.
\end{proof}

\begin{remark}\label{remBM}
If $H = 1/2$, then $K_{ij}^H = P_{ij}^{-1}$, where $P_{ij}^{-1}$ are determined by Equation~\eqref{eqSpOperIExplicit}.
\end{remark}

According to Equation~\eqref{eqSpFAlphaExplicit}, all expansion coefficients $F_i^{H+1/2}$ include the factor $T^{H+1}$, and values $F_i^{H+1/2} / T^{H+1}$ are independent on $T$. Consequently, all expansion coefficients $K_{ij}^H$ include the factor $T^{H+1/2}$, and values $K_{ij}^H / T^{H+1/2}$ are also independent on $T$. This means that it is sufficient to have the matrix representation of the operator $\ck_H$ under the condition $\mathds{T} = [0,1]$. Then, the matrix representation under the condition $\mathds{T} = [0,T]$ can be obtained through the scaling factor $T^{H+1/2}$. Obviously, this property can be proved if we will analyze the dependence of elements $A_{ij}^\alpha$ and $P_{ij}^{-\beta}$ on $T$ and apply Equation~\eqref{eqSpFBMFiniteOper}.

The specified property reflects the self-similarity of the fractional Brownian motion. The factor $1/\sqrt{T}$ in functions $\{\hat P(i,\cdot)\}_{i=0}^\infty$ according to Equation~\eqref{eqDefLeg} leads to the factor $T^H$ on the right-hand side of Equation~\eqref{eqSpFBMFinite}. Recall that for the self-similar random process with the Hurst index $H$, a change in the time scale $t \mapsto at$ is equivalent to a change in the phase scale $x \mapsto a^H x$, i.e., $\mathrm{Law} \bigl( B_H(at), t \geqslant 0 \bigr) = \mathrm{Law} \bigl( a^H B_H(t), t \geqslant 0 \bigr)$, $a > 0$, where $\mathrm{Law}(\cdot)$ means the distribution law~\cite{Shi_99}.

\begin{remark}\label{remOper}
Equations~\eqref{eqSpOperKHLExplicit} and~\eqref{eqSpOperKHijExplicit} are obtained using Equation~\eqref{eqDefFBMFiniteOper}, so in the proof of Theorem~\ref{thmMain}, cases $H < 1/2$, $H > 1/2$ and $H = 1/2$ are considered separately. However, the final result does not require distinguishing these cases. Below, we discuss this in more detail.

Additionally, we define $\cj_{0+}^{-\beta} = \cd_{0+}^\beta$ as the left-sided Riemann--Liouville differentiation operator of fractional order $\beta > 0$, i.e.,
\[
  \cd_{0+}^\beta \phi(t) = \frac{1}{\Gamma(n - \beta)} \, \frac{d^n}{dt^n} \int_0^t {\frac{\phi(\tau) d\tau}{(t-\tau)^{\beta-n+1}}}, \ \ \ t > 0,
\]
where $n = \lfloor \beta \rfloor + 1$ and $\lfloor \,\cdot\, \rfloor$ denotes the integer part. We also define $\cj_{0+}^0$ as the identity operator.

Equation~\eqref{eqFracInt} used in the proof remains valid for an arbitrary $\beta \in (-\infty,\infty)$~\cite{SamKilMar_87}. Moreover, values $K_{ij}^H$ completely determine the kernel $k_H(\cdot)$ and also the operator $\ck_H$, since
\[
  k_H(t,\tau) = \sum\limits_{i,j=0}^\infty K_{ij}^H \hat P(i,t) \hat P(j,t), \ \ \ t,\tau \in \mathds{T}.
\]

This implies that
\[
  \cj_{0+}^{2H} \circ \ca_{f_{1/2-H}} \circ \cj_{0+}^{1/2-H} \circ \ca_{f_{H-1/2}} = \cj_{0+}^1 \circ \ca_{f_{H-1/2}} \circ \cj_{0+}^{H-1/2} \circ \ca_{f_{1/2-H}}, \ \ \ H \in (0,1),
\]
therefore,
\begin{equation}\label{eqDefFBMFiniteOperNew}
  \begin{aligned}
    \ck_H & = a_H \, \cj_{0+}^{2H} \circ \ca_{f_{1/2-H}} \circ \cj_{0+}^{1/2-H} \circ \ca_{f_{H-1/2}} \\
    & = a_H \, \cj_{0+}^1 \circ \ca_{f_{H-1/2}} \circ \cj_{0+}^{H-1/2} \circ \ca_{f_{1/2-H}}, \ \ \ H \in (0,1),
  \end{aligned}
\end{equation}
where the operator $\cj_{0+}^\beta$ is used subject to $\beta \in (-1/2,2)$.
\end{remark}

\section{Approximate Representation of Fractional Brownian Motion}\label{secSpFBM}

In order to approximate the fractional Brownian motion, we replace the series in Equations~\eqref{eqSpFBMFinite} and~\eqref{eqSpFBMCoef} with their partial sums:
\begin{equation}\label{eqSpFBMFiniteApprox}
  B_H(t) \approx \tilde B_H(t) = \sum\limits_{i=0}^{L-1} \tilde \cb_i^H \hat P(i,t), \ \ \ t \in \mathds{T},
\end{equation}
where
\[
  \tilde \cb_i^H = \sum\limits_{j=0}^{L-1} K_{ij}^H \cv_j, \ \ \ i = 0,1,\dots,L-1.
\]

Using matrix notation, we have $\tilde \cb^H = \bar K^H \bar \cv$, where $\tilde \cb^H$ and $\bar \cv$ are random column matrices of size $L$ with elements $\tilde \cb_i^H$ and $\cv_i$, respectively, and $\bar K^H$ is the square matrix of size $L \times L$ with elements $K_{ij}^H$. Independent normally distributed random variables $\cv_i$ are defined by Equation~\eqref{eqSpFBMCoef}.

The notation $\bar M$ corresponds to the truncation of the infinite column matrix or infinite matrix $M$, and the notation $\tilde M$ means that, in addition to the truncation, the remaining elements of $M$ contain some errors, in particular $\bar \cb^H \neq \tilde \cb^H$ and $\bar K^H \neq \tilde K^H$, where the matrix $\tilde K^H$ is defined as follows~\cite{Ryb_Comp25}:
\begin{equation}\label{eqSpFBMFiniteOperApprox}
  \tilde K^H = \left\{
    \begin{array}{ll}
      a_H \, \bar P^{-2H} \bar A^{1/2-H} \bar P^{-(1/2-H)} \bar A^{H-1/2} & \text{for} ~ H < 1/2 \\
      a_H \, \bar P^{-1} \bar A^{H-1/2} \bar P^{-(H-1/2)} \bar A^{1/2-H} & \text{for} ~ H > 1/2,
    \end{array}
  \right.
\end{equation}
and $\bar A^{H-1/2},\bar A^{1/2-H},\bar P^{-2H},\bar P^{-1},\bar P^{-(1/2-H)}$, and $\bar P^{-(H-1/2)}$ are square matrices of size $L \times L$, obtained by the truncation of infinite matrices $A^{H-1/2},A^{1/2-H},P^{-2H},P^{-1},P^{-(1/2-H)}$, and $P^{-(H-1/2)}$, respectively. The inequality $\bar K^H \neq \tilde K^H$ holds under the condition $H \neq 1/2$, and $\tilde K^H = \bar K^H = \bar P^{-1}$ otherwise (see Remark~\ref{remBM}).

The paper~\cite{Ryb_Comp25} proposes the approximate representation of the fractional Brownian motion given by the equation
\begin{equation}\label{eqSpFBMFiniteApproxPlus}
  B_H(t) \approx \tilde B_H^*(t) = \sum\limits_{i=0}^{L-1} \tilde \cb_i^H \hat P(i,t), \ \ \ t \in \mathds{T},
\end{equation}
where
\[
  \tilde \cb_i^H = \sum\limits_{j=0}^{L-1} \tilde K_{ij}^H \cv_j, \ \ \ i = 0,1,\dots,L-1,
\]
i.e., $\tilde \cb^H = \tilde K^H \bar \cv$, where $ \tilde K^H$ is the square matrix of size $L \times L$ with elements $\tilde K_{ij}^H$ corresponding to Equation~\eqref{eqSpFBMFiniteOperApprox}.

The mean square errors of the fractional Brownian motion approximation can be calculated exactly. These errors depend on $H$ and $L$, but such a dependence is not indicated for simplicity:
\[
  \varepsilon = \mathrm{E} \int_\mathds{T} \bigl( B_H(t) - \tilde B_H(t) \bigr)^2 dt, \ \ \ \varepsilon^* = \mathrm{E} \int_\mathds{T} \bigl( B_H(t) - \tilde B_H^*(t) \bigr)^2 dt.
\]

\begin{theorem}\label{thmApprox}
The mean square approximation errors $\varepsilon$ and $\varepsilon^*$ satisfy the following expressions:
\[
  \varepsilon = \| k_H(\cdot) \|_\LPTT^2 - \|\bar K^H\|^2, \ \ \ \varepsilon^* = \| k_H(\cdot) \|_\LPTT^2 - \|\bar K^H\|^2 + \|\bar K^H - \tilde K^H\|^2,
\]
where $\| k_H(\cdot) \|_\LPTT^2$ is given by Equation~\eqref{eqKHSquaredNorm}, and the notation $\|\cdot\|$ means the Euclidean norm of the matrix.
\end{theorem}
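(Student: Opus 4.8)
The plan is to compute each mean square error by expanding the square, using linearity of expectation together with the independence and standard normality of the random variables $\cv_i$, and then to recognize the resulting expressions as squared norms of appropriate (infinite or truncated) matrices. First I would write, for the approximation $\tilde B_H$ built from $\bar K^H$,
\[
  \varepsilon = \mathrm{E} \int_\mathds{T} \bigl( B_H(t) - \tilde B_H(t) \bigr)^2 dt
  = \mathrm{E} \int_\mathds{T} B_H^2(t) dt - 2\, \mathrm{E} \int_\mathds{T} B_H(t) \tilde B_H(t) dt + \mathrm{E} \int_\mathds{T} \tilde B_H^2(t) dt .
\]
The first term equals $\| k_H(\cdot) \|_\LPTT^2$ by the already-established Equation~\eqref{eqKHSquaredNorm}. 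For the remaining two terms I would substitute the expansions~\eqref{eqSpFBMFinite} and~\eqref{eqSpFBMFiniteApprox}, use $\mathrm{E}[\cv_i \cv_j] = \delta_{ij}$, and exploit orthonormality of $\{\hat P(i,\cdot)\}$ in $\LPT$ to collapse the time integral. This turns $\mathrm{E} \int_\mathds{T} \tilde B_H^2 dt$ into $\sum_{i=0}^{L-1} \mathrm{E}[(\tilde\cb_i^H)^2] = \sum_{i=0}^{L-1} \sum_{j=0}^{L-1} (K_{ij}^H)^2 = \|\bar K^H\|^2$, and the cross term into $\sum_{i=0}^{L-1} \sum_{j=0}^{L-1} (K_{ij}^H)^2 = \|\bar K^H\|^2$ as well, because only the indices $i,j \le L-1$ survive the projection onto the truncated basis. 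Combining gives $\varepsilon = \| k_H(\cdot) \|_\LPTT^2 - 2\|\bar K^H\|^2 + \|\bar K^H\|^2 = \| k_H(\cdot) \|_\LPTT^2 - \|\bar K^H\|^2$.

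For $\varepsilon^*$ I would carry out the same expansion but with $\tilde B_H^*$ from~\eqref{eqSpFBMFiniteApproxPlus}, whose coefficients use $\tilde K^H$ rather than $\bar K^H$. The quadratic term now yields $\|\tilde K^H\|^2$. The cross term, by the same orthogonality argument, involves $B_H$'s true coefficients against $\tilde B_H^*$'s coefficients and produces $\sum_{i,j=0}^{L-1} K_{ij}^H \tilde K_{ij}^H = \langle \bar K^H, \tilde K^H \rangle$, the Frobenius inner product of the two truncated matrices. Hence
\[
  \varepsilon^* = \| k_H(\cdot) \|_\LPTT^2 - 2\langle \bar K^H, \tilde K^H \rangle + \|\tilde K^H\|^2 .
\]
The final algebraic step is to rewrite this as $\| k_H(\cdot) \|_\LPTT^2 - \|\bar K^H\|^2 + \|\bar K^H - \tilde K^H\|^2$, which is immediate from the identity $\|\bar K^H - \tilde K^H\|^2 = \|\bar K^H\|^2 - 2\langle \bar K^H, \tilde K^H\rangle + \|\tilde K^H\|^2$; adding and subtracting $\|\bar K^H\|^2$ gives exactly the claimed form. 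When $H = 1/2$ one has $\tilde K^H = \bar K^H$ by Remark~\ref{remBM}, so the extra term vanishes and $\varepsilon^* = \varepsilon$, consistent with the statement.

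The routine parts are the expansion of the square and the bookkeeping of indices; the one place requiring a little care is the justification that interchanging $\mathrm{E}$ with the time integral and with the infinite sum in $B_H(t) = \sum_i \cb_i^H \hat P(i,t)$ is legitimate. Here I would invoke that $\{\cb_i^H\}$ is the image under the Hilbert--Schmidt (indeed trace class, via $R_H$) operator $\ck_H$ of the i.i.d.\ sequence $\{\cv_i\}$, so $\sum_i \mathrm{E}[(\cb_i^H)^2] = \|K^H\|^2 = \|k_H(\cdot)\|_\LPTT^2 < \infty$; this gives $L_2$-convergence of the series in $\LPT$ with probability one and in mean square, which is enough to justify Fubini/Tonelli and term-by-term expectation. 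With that dispatched, every sum that appears is a finite double sum over $0 \le i,j \le L-1$, and the identification with Euclidean (Frobenius) matrix norms is purely notational. I therefore expect no genuine obstacle; the main thing to get right is simply not to confuse $\|\bar K^H\|^2$ with $\|\tilde K^H\|^2$ in the two cross terms — the cross term for $\varepsilon$ pairs the truncated true matrix with itself, whereas for $\varepsilon^*$ it pairs it with the perturbed matrix $\tilde K^H$.
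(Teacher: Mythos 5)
Your proposal is correct and rests on the same ingredients as the paper's own proof: orthonormality of $\{\hat P(i,\cdot)\}$ in $\LPT$ and of $\{\cv_j\}$ in $\cl_2$, together with Parseval's identity $\| k_H(\cdot) \|_\LPTT^2 = \sum_{i,j}(K_{ij}^H)^2$. The only difference is bookkeeping: the paper splits the residual into orthogonal pieces (the tail indices $\mathrm{Y}$ plus, for $\varepsilon^*$, the finite-block perturbation $K_{ij}^H-\tilde K_{ij}^H$) and applies the Pythagorean identity directly, whereas you expand the square, compute the cross terms $\|\bar K^H\|^2$ and $\langle \bar K^H,\tilde K^H\rangle$, and complete the square — an equivalent route leading to the same expressions.
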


\begin{proof}
Equations~\eqref{eqSpFBMFinite} and~\eqref{eqSpFBMFiniteApprox} can be rewritten as
\[
  B_H(t) = \sum\limits_{i,j=0}^\infty K_{ij}^H \hat P(i,t) \cv_j, \ \ \ \tilde B_H(t) = \sum\limits_{i,j=0}^{L-1} K_{ij}^H \hat P(i,t) \cv_j,
\]
then
\[
  B_H(t) - \tilde B_H(t) = \sum\limits_{i,j=0}^\infty K_{ij}^H \hat P(i,t) \cv_j - \sum\limits_{i,j=0}^{L-1} K_{ij}^H \hat P(i,t) \cv_j = \sum\limits_{(i,j) \in \mathrm{Y}} K_{ij}^H \hat P(i,t) \cv_j,
\]
where $\mathrm{Y} = (\mathrm{N}_0 \times \mathrm{N}_L) \cup (\mathrm{N}_L \times \mathrm{N}_0)$, $\mathrm{N}_k = \{k,k+1,k+2,\dots\}$.

Functions $\{\hat P(i,\cdot)\}_{i=0}^\infty$ form the orthonormal system in $\LPT$, and random variables $\{\cv_j\}_{j=0}^\infty$ form the orthonormal system in $\cl_2$, the space of random variables~\cite{GihSco_77}. Using properties of orthogonal expansions, we obtain
\[
  \varepsilon = \mathrm{E} \int_\mathds{T} \biggl( \, \sum\limits_{(i,j) \in \mathrm{Y}}^\infty K_{ij}^H \hat P(i,t) \cv_j \biggr)^2 dt =
  \sum\limits_{(i,j) \in \mathrm{Y}}^\infty (K_{ij}^H)^2 = \sum\limits_{i,j=0}^\infty (K_{ij}^H)^2 - \sum\limits_{i,j=0}^{L-1} (K_{ij}^H)^2.
\]

According to Parseval's identity,
\[
  \| k_H(\cdot) \|_\LPTT^2 = \sum\limits_{i,j=0}^\infty (K_{ij}^H)^2,
\]
hence, $\varepsilon = \| k_H(\cdot) \|_\LPTT^2 - \|\bar K^H\|^2$.

Next, we can rewrite Equation~\eqref{eqSpFBMFiniteApproxPlus} in the form
\[
  \tilde B_H^*(t) = \sum\limits_{i,j=0}^{L-1} \tilde K_{ij}^H \hat P(i,t) \cv_j,
\]
then
\[
  B_H(t) - \tilde B_H^*(t) = B_H(t) - B_H(t) + B_H(t) - \tilde B_H^*(t) = \sum\limits_{(i,j) \in \mathrm{Y}} K_{ij}^H \hat P(i,t) \cv_j + \sum\limits_{i,j=0}^{L-1} (K_{ij}^H - \tilde K_{ij}^H) \hat P(i,t) \cv_j.
\]

Using properties of orthogonal expansions, we have
\begin{align*}
  \varepsilon^* & = \mathrm{E} \int_\mathds{T} \biggl( \, \sum\limits_{(i,j) \in \mathrm{Y}} K_{ij}^H \hat P(i,t) \cv_j + \sum\limits_{i,j=0}^{L-1} (K_{ij}^H - \tilde K_{ij}^H) \hat P(i,t) \cv_j \biggr)^2 dt \\
  & = \sum\limits_{(i,j) \in \mathrm{Y}}^\infty (K_{ij}^H)^2 + \sum\limits_{i,j=0}^{L-1} (K_{ij}^H - \tilde K_{ij}^H)^2 = \| k_H(\cdot) \|_\LPTT^2 - \|\bar K^H\|^2 + \|\bar K^H - \tilde K^H\|^2.
\end{align*}
The theorem is proved.
\end{proof}

The mean square approximation errors $\varepsilon$ and $\varepsilon^*$ are related as $\varepsilon^* > \varepsilon$, except for the case $H = 1/2$, for which $\varepsilon^* = \varepsilon$, since $\varepsilon^* = \varepsilon + \|\bar K^H - \tilde K^H\|^2$.

In~\cite{Ryb_Comp25}, the norm of the difference between covariance functions of random processes $B_H(\cdot)$ and $\tilde B_H(\cdot)$ was chosen as the accuracy measure of the approximation instead of the mean square approximation error of the random process: Equation~\eqref{eqSpFBMFiniteApprox} ensures that the mathematical expectation of the random process $\tilde B_H(\cdot)$ is equal to zero for an arbitrary $L$, but its covariance function differs from $R_H(\cdot)$.

The mean square approximation errors $\varepsilon$ and $\varepsilon^*$ are given in Tables~\ref{tabBHError} and~\ref{tabBHErrorAsterisk} for $T = 1$, $L = 4,8,\dots,1024$, and $H = 0.1,0.2,\dots,0.9$.

\begin{table}[ht]
\begin{center}
\renewcommand{\arraystretch}{1.1}
\caption{The mean square approximation error $\varepsilon$}\label{tabBHError}
\begin{tabular}{cccccccccc}
  \hline
  $H$ & $L = 4$ & $L = 8$ & $L = 16$ & $L = 32$ & $L = 64$ & $L = 128$ & $L = 256$ & $L = 512$ & $L = 1024$ \\
  \hline
  0.1 & 0.384241 & 0.322871 & 0.271951 & 0.229895 & 0.195015 & 0.165934 & 0.141569 & 0.121067 & 0.103751 \\
  0.2 & 0.186574 & 0.136214 & 0.100394 & 0.074562 & 0.055684 & 0.041750 & 0.031391 & 0.023650 & 0.017844 \\
  0.3 & 0.103451 & 0.065528 & 0.042250 & 0.027513 & 0.018016 & 0.011834 & 0.007788 & 0.005130 & 0.003381 \\
  0.4 & 0.060670 & 0.033037 & 0.018487 & 0.010481 & 0.005981 & 0.003424 & 0.001963 & 0.001127 & 0.000647 \\
  0.5 & 0.035714 & 0.016667 & 0.008065 & 0.003968 & 0.001969 & 0.000980 & 0.000489 & 0.000244 & 0.000122 \\
  0.6 & 0.020455 & 0.008205 & 0.003434 & 0.001466 & 0.000632 & 0.000273 & 0.000119 & 0.000052 & 0.000022 \\
  0.7 & 0.013216 & 0.004937 & 0.001924 & 0.000763 & 0.000305 & 0.000123 & 0.000050 & 0.000020 & 0.000008 \\
  0.8 & 0.021488 & 0.011508 & 0.006394 & 0.003602 & 0.002043 & 0.001164 & 0.000666 & 0.000381 & 0.000219 \\
  0.9 & 0.081197 & 0.061740 & 0.046942 & 0.035625 & 0.027012 & 0.020475 & 0.015518 & 0.011760 & 0.008913 \\
  \hline
\end{tabular}
\end{center}
\end{table}

For $\mathds{T} = [0,T]$, it is sufficient to multiply the errors given in Tables~\ref{tabBHError} and~\ref{tabBHErrorAsterisk} by $T^{2H+1}$, since this factor is present in all the error terms: $\| k_H(\cdot) \|_\LPTT^2$, $\|\bar K^H\|^2$, and $\|\bar K^H - \tilde K^H\|^2$. This is another consequence of the self-similarity of the fractional Brownian motion.

\begin{table}[ht]
\begin{center}
\renewcommand{\arraystretch}{1.1}
\caption{The mean square approximation error $\varepsilon^*$}\label{tabBHErrorAsterisk}
\begin{tabular}{cccccccccc}
  \hline
  $H$ & $L = 4$ & $L = 8$ & $L = 16$ & $L = 32$ & $L = 64$ & $L = 128$ & $L = 256$ & $L = 512$ & $L = 1024$ \\
  \hline
  0.1 & 0.385941 & 0.324399 & 0.273198 & 0.230844 & 0.195706 & 0.166421 & 0.141905 & 0.121296 & 0.103905 \\
  0.2 & 0.186654 & 0.136295 & 0.100453 & 0.074599 & 0.055705 & 0.041761 & 0.031396 & 0.023653 & 0.017846 \\
  0.3 & 0.103540 & 0.065560 & 0.042260 & 0.027516 & 0.018017 & 0.011835 & 0.007788 & 0.005130 & 0.003381 \\
  0.4 & 0.060718 & 0.033052 & 0.018491 & 0.010483 & 0.005981 & 0.003424 & 0.001963 & 0.001127 & 0.000647 \\
  0.5 & 0.035714 & 0.016667 & 0.008065 & 0.003968 & 0.001969 & 0.000980 & 0.000489 & 0.000244 & 0.000122 \\
  0.6 & 0.020484 & 0.008212 & 0.003436 & 0.001466 & 0.000632 & 0.000273 & 0.000119 & 0.000052 & 0.000022 \\
  0.7 & 0.013274 & 0.004949 & 0.001927 & 0.000764 & 0.000305 & 0.000123 & 0.000050 & 0.000020 & 0.000008 \\
  0.8 & 0.021554 & 0.011519 & 0.006396 & 0.003602 & 0.002043 & 0.001164 & 0.000666 & 0.000381 & 0.000219 \\
  0.9 & 0.081270 & 0.061755 & 0.046945 & 0.035625 & 0.027012 & 0.020475 & 0.015518 & 0.011760 & 0.008913 \\
  \hline
\end{tabular}
\end{center}
\end{table}

On a qualitative level, these results are close to those published in~\cite{Ryb_Comp25}. Here, we mean the convergence rate depending on $H$ that is defined by the analysis of the above tables, although another accuracy measure of the approximation was used in~\cite{Ryb_Comp25}. However, it is interesting to note that even for small $L$, the data from Tables~\ref{tabBHError} and~\ref{tabBHErrorAsterisk} are very close. This implies that the representation of the matrix $K^H$ as a product of four matrices has almost no effect on the accuracy for the approximate representation but allows one to form computationally stable algorithms (the numerical results presented in this paper were obtained using C++ program\footnote{The source code is available at \url{https://github.com/rkoffice/fBm}} with Boost.Multiprecision library). For Hurst indices close to the boundaries of the interval $(0,1)$, the high relative error and low convergence rate are largely due to properties of the function $k_H(\cdot)$. To calculate the relative error, it is sufficient to compare the data from Tables~\ref{tabBHError} and~\ref{tabBHErrorAsterisk} with the squared norm of the function $k_H(\cdot)$, and this squared norm is equal to $1/(2H+1)$ for $T = 1$.

The operator $\cj_{0+}^\beta$ subject to $\beta \in (-1/2,2)$ is discussed in Remark~\ref{remOper}. For this operator, we can use the matrix representation $P^{-\beta}$. Elements $P_{ij}^{-\beta}$ determined by Equations~\eqref{eqSpAOperAlphaExplicitSafe1} and~\eqref{eqSpAOperAlphaExplicitSafe2} are well-defined not only for non-integer $\beta > 0$ but also for $\beta \in (-1/2,0)$. Recall that elements $P_{ij}^{-1}$ are given by the formula~\eqref{eqSpOperIExplicit}, and $\cj_{0+}^0$ is the identity operator, so $P_{ij}^0 = \delta_{ij}$ is the Kronecker delta.

Consequently, the following expressions can be used to approximate the fractional Brownian motion by Equation~\eqref{eqSpFBMFiniteApproxPlus}:
\begin{equation}\label{eqSpFBMFiniteOperApproxA}
  \tilde K^H = a_H \, \bar P^{-2H} \bar A^{1/2-H} \bar P^{-(1/2-H)} \bar A^{H-1/2}, \ \ \ H \in (0,1),
\end{equation}
or
\begin{equation}\label{eqSpFBMFiniteOperApproxB}
  \tilde K^H = a_H \, \bar P^{-1} \bar A^{H-1/2} \bar P^{-(H-1/2)} \bar A^{1/2-H}, \ \ \ H \in (0,1),
\end{equation}
instead of Equation~\eqref{eqSpFBMFiniteOperApprox}.

The mean square approximation error $\varepsilon^*$ can be calculated exactly by the expression proved in Theorem~\ref{thmApprox}. The results obtained by Equation~\eqref{eqSpFBMFiniteOperApproxA} for $H = 0.6,0.7,0.8,0.9$, as well as the results obtained by Equation~\eqref{eqSpFBMFiniteOperApproxB} for $H = 0.1,0.2,0.3,0.4$, are given in Table~\ref{tabBHErrorAsteriskNew} for $T = 1$ and $L = 4,8,\dots,1024$. For $H = 1/2$, Equations~\eqref{eqSpFBMFiniteOperApproxA} and~\eqref{eqSpFBMFiniteOperApproxB} provide the same result. It is easy to see that the data from Tables~\ref{tabBHErrorAsterisk} and~\ref{tabBHErrorAsteriskNew} differ insignificantly.

\begin{table}[ht]
\begin{center}
\renewcommand{\arraystretch}{1.1}
\caption{The mean square approximation error $\varepsilon^{*}$}\label{tabBHErrorAsteriskNew}
\begin{tabular}{cccccccccc}
  \hline
  $H$ & $L = 4$ & $L = 8$ & $L = 16$ & $L = 32$ & $L = 64$ & $L = 128$ & $L = 256$ & $L = 512$ & $L = 1024$ \\
  \hline
  0.1 & 0.387505 & 0.324598 & 0.272852 & 0.230361 & 0.195257 & 0.166061 & 0.141638 & 0.121105 & 0.103773 \\
  0.2 & 0.188157 & 0.136902 & 0.100683 & 0.074680 & 0.055731 & 0.041768 & 0.031398 & 0.023653 & 0.017845 \\
  0.3 & 0.103962 & 0.065718 & 0.042318 & 0.027537 & 0.018024 & 0.011837 & 0.007788 & 0.005130 & 0.003381 \\
  0.4 & 0.060756 & 0.033064 & 0.018495 & 0.010484 & 0.005982 & 0.003424 & 0.001963 & 0.001127 & 0.000647 \\
  0.5 & 0.035714 & 0.016667 & 0.008065 & 0.003968 & 0.001969 & 0.000980 & 0.000489 & 0.000244 & 0.000122 \\
  0.6 & 0.020497 & 0.008216 & 0.003437 & 0.001466 & 0.000632 & 0.000273 & 0.000119 & 0.000052 & 0.000022 \\
  0.7 & 0.013329 & 0.004961 & 0.001930 & 0.000764 & 0.000305 & 0.000123 & 0.000050 & 0.000020 & 0.000008 \\
  0.8 & 0.021687 & 0.011556 & 0.006409 & 0.003607 & 0.002045 & 0.001165 & 0.000666 & 0.000381 & 0.000219 \\
  0.9 & 0.081701 & 0.061995 & 0.047097 & 0.035724 & 0.027077 & 0.020517 & 0.015546 & 0.011779 & 0.008925 \\
  \hline
\end{tabular}
\end{center}
\end{table}

\begin{remark}
Next, we briefly describe the reason for the low convergence rate when $H$ close to 0 or 1. In Section~\ref{secPreliminary}, expressions for expansion coefficients $F_i^\alpha$ of the power function $f_\alpha(t) = t^\alpha$ relative to the Legendre polynomials~\eqref{eqDefLeg} are given. For them, according to Parseval's identity, we have
\[
  \int_\mathds{T} f_\alpha^2(t) dt = \sum\limits_{i=0}^\infty (F_i^\alpha)^2.
\]

We study the convergence rate for the series from this identity for non-integer $\alpha$ (for integer $\alpha$, only the first $\alpha+1$ expansion coefficients $F_i^\alpha$ are non-zero). Applying the Raabe--Duhamel test~\cite{PolMan_07}, we obtain the equality
\[
  \lim\limits_{i \to \infty} i \biggl( \frac{(F_i^\alpha)^2}{(F_{i+1}^\alpha)^2} - 1 \biggr) = \lim\limits_{i \to \infty} i \biggl( \frac{2i+1}{2i+3} \biggl[ \frac{\alpha+i+2}{\alpha-i} \biggr]^2 - 1 \biggr) = 4\alpha+3,
\]
which takes into account Equation~\eqref{eqSpFAlphaImplicit}. The convergence of the series under consideration is equivalent to the convergence of Dirichlet series
\[
  \sum\limits_{i=1}^\infty \frac{1}{i^{4\alpha+3}},
\]
which converges in the same way as the integral
\[
  \int_1^\infty \frac{dt}{t^{4\alpha+3}} = -\frac{1}{(4\alpha+2) \, t^{4\alpha+2}} \bigg|_1^\infty = -\frac{1}{4\alpha+2} \biggl( \lim\limits_{t \to \infty} \frac{1}{t^{4\alpha+2}} - 1 \biggr),
\]
i.e., we have the following condition for the convergence: $2\alpha+1 > 0$. Moreover, it follows that
\[
  \biggl\| f_\alpha(\cdot) - \sum\limits_{i=0}^{L-1} F_i^\alpha \hat P(i,\cdot) \biggl\|_\LPT^2 = \sum\limits_{i=L}^\infty (F_i^\alpha)^2 \approx \frac{C^2}{L^{4\alpha+2}}
  \ \ \ \text{and} \ \ \
  \biggl\| f_\alpha(\cdot) - \sum\limits_{i=0}^{L-1} F_i^\alpha \hat P(i,\cdot) \biggl\|_\LPT \approx \frac{C}{L^{2\alpha+1}},
\]
where $C > 0$ is a constant independent of $L$.

A more precise result can be obtained based on the estimate given in~\cite{HouSchwabSuli_JNA02, CanHusQuaZan_06}, but in this context the only important thing is that for $\alpha \to -1/2$ the convergence rate tends to zero.

Various representations of the function $k_H(\cdot)$ contain the factor $(t/\tau)^{H-1/2} = t^{H-1/2} \, \tau^{1/2-H}$~\cite{BiaHuOksZha_08}. There is also the following estimate for this function~\cite{DecUst_PA99}:
\[
  |k_H(t,\tau)| \leqslant C_H \tau^{-|H-1/2|} (t-\tau)^{-\max\{1/2-H,0\}} 1(t-\tau),
\]
where $C_H > 0$ is a constant depending on $H$.

The function $k_H(\cdot)$ is not an elementary function. Moreover, it is a function of two variables and therefore the study of the convergence rate for the series
\[
  \sum\limits_{i,j=0}^\infty (K_{ij}^H)^2
\]
is a separate problem. But factors $t^{H-1/2}$ and $\tau^{1/2-H}$ in various representations of the function $k_H(\cdot)$ imply that the convergence rate for this series tends to zero for $H \to 0$ or $H \to 1$ (the equivalent condition $-|H-1/2| \to -1/2$).
\end{remark}

\section{Conclusions}\label{secSpConcl}

The paper considers the orthogonal expansion of the fractional Brownian motion relative to the Legendre polynomials. This expansion underlies the method for approximation and simulation of this random process in continuous time, namely the fractional Brownian motion is approximately represented by the polynomial with random coefficients. The relations for expansion coefficients of the kernel related to the integral representation of the fractional Brownian motion and equations for the mean square approximation error are obtained. The proposed method can be used not only to simulate the paths of the considered random process but also other random processes as well as functionals depending on the fractional Brownian motion.

\end{document}